\newtheorem{theorem}{Theorem}
\title{\LARGE \bf
Stabilization of a Nonholonomic Car Model with Off-Hooked Trailers
}
\author{Alexander Zuyev$^{1,3}$ and Victoria Grushkovskaya$^{2,3}$% <-this % stops a space
%\thanks{*This work was not supported by any organization}% <-this % stops a space
\thanks{$^{1}$Max Planck Institute for Dynamics of Complex Technical Systems, 39106 Magdeburg, Germany
        {\tt\small zuyev@mpi-magdeburg.mpg.de}}%
\thanks{$^{2}$Institute of Mathematics,
        University of Klagenfurt, 9020 Klagenfurt am W\"orthersee, Austria
        {\tt\small viktoriia.grushkovska@aau.at}}%
\thanks{$^{3}$Institute of Applied Mathematics \& Mechanics, National Academy of Sciences of Ukraine, 84116 Sloviansk, Ukraine}
}
\begin{document}

\maketitle
\thispagestyle{empty}
\pagestyle{empty}

%%%%%%%%%%%%%%%%%%%%%%%%%%%%%%%%%%%%%%%%%%%%%%%%%%%%%%%%%%%%%%%%%%%%%%%%%%%%%%%%
\begin{abstract}
We consider a kinematic model of a controlled car with two trailers by assuming that each trailer is attached at some distance from the preceding axle (``off-hooked trailers''). For this model, we derive the transformation towards privileged coordinates and present the corresponding nilpotent quasihomogeneous approximate system. The components of this nilpotent approximation are written explicitly in terms of mechanical parameters of the original system. 
The constructed system does not satisfy the Brockett necessary stabilizability condition, and the design of time-varying feedback controllers with oscillating components is proposed. It is proved that these controllers ensure the exponential convergence of solutions to the trivial equilibrium, and simulation results are presented to illustrate the behavior of the closed-loop system.
\end{abstract}

%%%%%%%%%%%%%%%%%%%%%%%%%%%%%%%%%%%%%%%%%%%%%%%%%%%%%%%%%%%%%%%%%%%%%%%%%%%%%%%%
\section{INTRODUCTION}

The stabilization problem for nonholonomic systems is a hot topic in mathematical control theory, which boasts rich theoretical content and significant applications in robotics.
To provide an overview without claiming to be comprehensive, we refer to the papers~\cite{zhang2022new,GZ23}  for surveys of some recent developments in this field. 

An important class of nonholonomic systems is represented by kinematic models of wheeled vehicles under the rolling without slipping conditions~\cite{luca1995modelling,grushkovskaya2018family}.
In particular, it is well-known that a kinematic model of a car with trailers can be transformed into a chained-form system~\cite{sordalen1993conversion}, provided that each subsequent trailer is attached to the center of the axle of the previous trailer.

In the paper~\cite{vendittelli1998nilpotent}, the authors considered the case of a car pulling two trailers, with each trailer hooked at a certain distance from the preceding wheel axle (``off-hooked trailers'').
It was noted in~\cite{vendittelli1998nilpotent} that the corresponding control system cannot be transformed into a chained form and does not satisfy the necessary and sufficient flatness conditions for two-input systems.
Although~\cite{vendittelli1998nilpotent} presented the general form of a nilpotent approximation for a car model with off-hooked trailers, it did not provide exact coordinate transformations or coefficients of vector fields. Moreover, the simulations were only conducted using open-loop controls for the case of unitary distances.

In the paper mentioned above, the construction of a homogeneous approximation is based on the approach described in~\cite{bellaiche2005geometry}. Another approximation scheme addresses the algebraic description of the time optimality problem as detailed in~\cite{sklyar2020construction}. Building on such an algebraic formulation, a method for constructing homogeneous approximations for control-affine systems has been proposed in~\cite{sklyar2022implementation} and implemented a Python-based web application. Although this application facilitates efficient computation of homogeneous approximations for systems that do not involve external parameters, the conversion to privileged coordinates still needs to be addressed, in addition to what the reported application covers.

The contribution of our present paper is twofold. First, we derive a mathematical model of a car with two off-hooked trailers. We assume that the length parameters of this model can be arbitrary. The resulting equations of motion are encapsulated in a driftless control-affine system with five states and two inputs.
To the best of our knowledge, the construction of nilpotent approximations for such parameterized models has not been explicitly treated so far. The considered class of systems does not satisfy the Brockett necessary stabilizability condition by a regular time-invariant feedback law. Thus, the second contribution of this paper is the development of stabilizing control algorithms with time-varying components. The exponential stabilization scheme with oscillating controllers is presented in Theorem~1. Finally, the performance of the constructed controllers is illustrated with numerical simulations.

\section{KINEMATIC MODEL OF A CAR WITH OFF-HOOKED TRAILERS}
\begin{figure}[t]
\hspace{-1em} \includegraphics[width=1\linewidth] {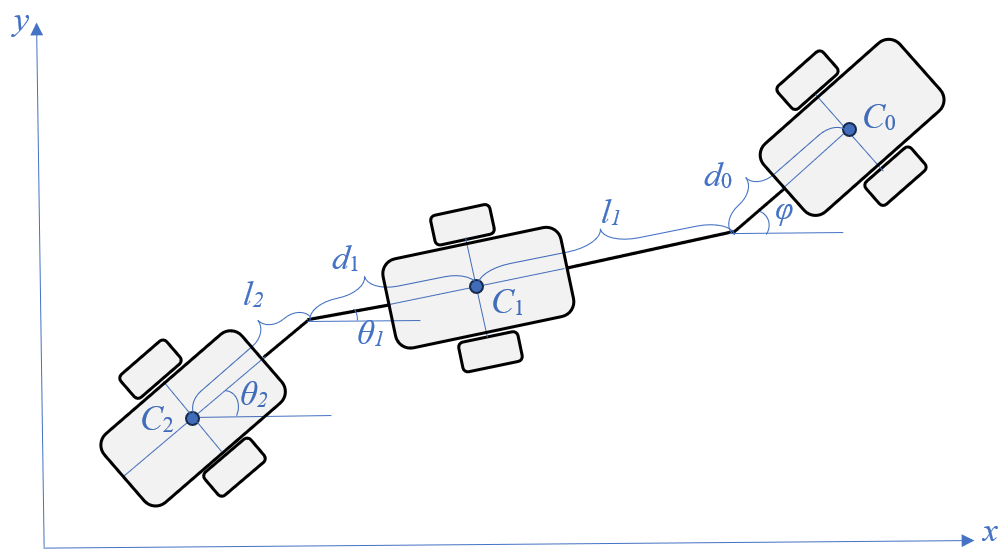}
 \caption{Car with two off-hooked trailers.}
\end{figure}
The rolling without slipping conditions take the form:
\begin{equation}\label{nonholonomic_constraints}
\begin{aligned}
& \dot x_{C_0} \sin \phi - \dot y_{C_0} \cos \phi =0,\\
& \dot x_{C_1} \sin \theta_1 - \dot y_{C_1} \cos \theta_1 =0,\\
& \dot x_{C_2} \sin \theta_2 - \dot y_{C_2} \cos \theta_2 =0,
\end{aligned}
\end{equation}
where $(x_{C_0},y_{C_0})$, $(x_{C_1},y_{C_1})$, and $(x_{C_2},y_{C_2})$ are the coordinates of points $C_0$, $C_1$, and $C_2$, respectively,
$$
\begin{aligned}
& x_{C_1} = x_{C_0}-d_0 \cos\phi - \ell_1 \cos \theta_1,\\
& y_{C_1} = y_{C_0}-d_0 \sin\phi - \ell_1 \sin \theta_1,\\
& x_{C_2} = x_{C_0}-d_0 \cos\phi - (d_1+\ell_1) \cos \theta_1 - \ell_2 \cos \theta_2,\\
& y_{C_2} = y_{C_0}-d_0 \sin\phi - (d_1+\ell_1) \sin \theta_1 - \ell_2 \sin \theta_2.
\end{aligned}
$$
Conditions~\eqref{nonholonomic_constraints} can be written as the driftless control-affine system
\begin{equation}\label{car_offhooking}
\dot x  = u_1 f_1(x) +u_2 f_2(x),\quad x\in{\mathbb R}^5,\; u\in{\mathbb R}^2
\end{equation}
with $x=(x_1,x_2,x_3,x_4,x_5)^\top$, $u=(u_1,u_2)^\top$, and
$$
%x= \begin{pmatrix}x_1 \\ x_2 \\ x_3 \\ x_4 \\ x_5\end{pmatrix},\;
f_1(x) = \begin{pmatrix}\cos x_3 \\ \sin x_3 \\ 0 \\ \frac{\sin(x_3-x_4)}{\ell_1} \\  f_{15}(x)\end{pmatrix},\;
f_2(x) = \begin{pmatrix}0 \\ 0 \\ 1 \\ -\frac{d_0\cos(x_3-x_4)}{\ell_1} \\ f_{25}(x)
\end{pmatrix},
$$
{\small
$$
\begin{aligned}
& f_{15}(x) = \frac{\sin(x_3-x_5)}{\ell_2} + \frac{(d_1+\ell_1)\sin(x_4-x_3)\cos(x_4-x_5)}{\ell_1 \ell_2},\\
& f_{25}(x) = \frac{d_0(d_1+\ell_1)\cos(x_3-x_4)\cos(x_4-x_5)}{\ell_1 \ell_2} - \frac{d_0\cos(x_3-x_5)}{\ell_2}.
\end{aligned}
$$}
Here, $x_1=x_{C_0}$, $x_2 = y_{C_0}$, $x_3=\phi$, $x_4=\theta_1$, $x_5 = \theta_2$,
$u_1$~represents the driving velocity, and $u_2$ denotes the steering velocity of the front axle, respectively.

For vector fields $f_i(x)$ and $f_j(x)$, we denote their Lie bracket as $[f_i,f_j](x)=\frac{\partial f_j(x)}{\partial x} f_i(x) - \frac{\partial f_i(x)}{\partial x} f_j(x)$. The system~\eqref{car_offhooking} satisfies the Lie Algebra Rank Condition~\cite{bellaiche2005geometry,NS} because the $5\times5$-matrix
$$
{\cal F}(x) = (f_1,f_2,[f_1,f_2],[f_1,[f_1,f_2]],[f_1,[f_1,[f_1,f_2]]])(x)
$$
is of full rank in a neighborhood of $x=0$. Indeed,
{\small
\begin{equation}\label{F_matrix}
{\cal F}(x) =  \begin{pmatrix}\cos x_3 & 0& \sin x_3 & 0 & 0 \\
\sin x_3 & 0& -\cos x_3 & 0 & 0 \\
0 & 1& 0 & 0 & 0 \\
\frac{\sin(x_3 - x_4)}{\ell_1} & -\frac{d_0 \cos(x_3 - x_4)}{\ell_1} & f_{34}(x)& f_{44}(x) & f_{54}(x) \\
f_{15}(x) & f_{25}(x) & f_{35}(x)& f_{45}(x) & f_{55}(x)
\end{pmatrix},
\end{equation}}
where the expressions for $f_{ij}(x)$ are given in the Appendix. Straightforward computations show that
\begin{equation}\label{rank5}
\det {\cal F}(0) = -\frac{(d_0+\ell_1)(d_0+\ell_2)(d_1+\ell_2)}{\ell_1^4 \ell_2^4} \neq 0.
\end{equation}

\section{CONSTRUCTION OF A WEIGHTED HOMOGENEOUS APPROXIMATION}
In this section, we apply the approach of~\cite{bellaiche2005geometry} to derive a homogeneous approximation of system~\eqref{car_offhooking} in a neighborhood of zero.
For this purpose, we first denote by ${\mathcal L}^1={\mathcal L}^1 (f_1,f_2)$ the set of all linear combinations of the vector fields $f_1$ and $f_2$ with real coefficients. Then, we consequently define ${\mathcal L}^s={\mathcal L}^{s} (f_1,f_2)$ as:
$$
{\mathcal L}^s={\mathcal L}^{s-1} + \sum_{i+j=s} [{\mathcal L}^{i},{\mathcal L}^{j}]\quad\text{for}\;s=2,3, ...,
$$
where ${\mathcal L}^0 =0$. For a point $p\in \mathbb R^5$ and nonnegative integer $s$, we denote the linear subspace
$$
L^s(p)=\{f(p)\,\vert\,f\in {\mathcal L}^{s}\}\subset {\mathbb R}^5.
$$
We have the following increasing sequence of linear subspaces because of condition~\eqref{rank5}:
$$
\{0\}\subset L^1(0)\subset L^2(0)\subset L^3(0)\subset L^4(0)={\mathbb R}^5,
$$
and $r=4$ is the degree of nonholonomy of system~\eqref{car_offhooking} at $p=0$.
The dimensions $n_s=\dim L^s(0)$ of the above subspaces are: $n_1=2$, $n_2=3$, $n_3=4$, $n_4=5$.
The algorithm of~\cite{bellaiche2005geometry} summarized in the following four steps.

In Step~1, we define the vector fields
$$
\begin{aligned}
& Y_1= f_1,\; Y_2= f_2,\; Y_3= [f_1,f_2],\; Y_4 = [f_1,[f_1,f_2]],\\
& Y_5 = [f_1,[f_1,[f_1,f_2]]],
\end{aligned}
$$
and note that $\{Y_1(0), Y_2(0), Y_3(0), Y_4(0), Y_5(0)\}$ is a basis of ${\mathbb R}^5$ due to condition~\eqref{rank5}.
Then, we introduce the privileged coordinates $y_1$, ..., $y_5$, such that $d y_1$, ..., $d y_5$ is a dual basis to $Y_1(0)$, ..., $Y_5(0)$, provided that the vector fields $Y_j$ are treated in the sense of differential operators.
It is easy to see that this can be formalized with the linear transformation
\begin{equation}\label{xy_transform_vector}
(y_1,y_2,y_3,y_4,y_5)^\top={\cal F}^{-1}(0)\,x,
\end{equation}
where ${\cal F}^{-1}(0)$ is the inverse matrix for~\eqref{F_matrix} evaluated at zero. In coordinate form, equation~\eqref{xy_transform_vector} is written as
\begin{equation}\label{xy_transform}
\begin{aligned}
y_1 =& x_1,\; y_2 =x_3,\; y_3 = -x_2,\\
y_4 =&(\ell_1 + \ell_2) x_2\\
& -\frac{d_0( d_0(\ell_1^2 + \ell_1 \ell_2 + \ell_2^2) + \ell_1 \ell_2(\ell_1 +\ell_2))}{(d_0 + \ell_1)(d_0+\ell_2)}x_3\\
& -\Bigl\{\frac{\ell_2^2 (\ell_1+\ell_2) (d_1 + \ell_1)}{(d_0+\ell_2) (d_1+\ell_2)} \\
&+ \frac{\ell_1^2 \ell_2(d_0d_1+(d_0 + d_1)\ell_1)  + d_0d_1\ell_1^3}{(d_0+\ell_1)(d_0+\ell_2) (d_1+\ell_2)} \Bigr\}x_4\\
&-\frac{\ell_2^4}{(d_0+\ell_2) (d_1+\ell_2)}x_5,\\
y_5 =&- \ell_1 \ell_2 x_2\\
& + \frac{d_0 \ell_1 \ell_2(\ell_1 \ell_2 + d_0(\ell_1 +\ell_2))} {(d_0 + \ell_1)(d_0+\ell_2)} x_3 \\
& + \Bigl\{\frac{\ell_1 \ell_2^3
(d_1+\ell_1)}{(d_0+\ell_2) (d_1+\ell_2)} \\
&+ \frac{\ell_1^2 \ell_2( \ell_2(d_0d_1+(d_0 + d_1)\ell_1) + d_0d_1 \ell_1
)
}{(d_0+\ell_1)(d_0+\ell_2) (d_1+\ell_2)} \Bigr\} x_4 \\
& + \frac{\ell_1 \ell_2^4}{(d_0+\ell_2) (d_1+\ell_2)} x_5.
\end{aligned}
\end{equation}

In Step~2, we assign the weight $w_j$ to each coordinate $y_j$ according to the following rule: $w_j$ is defined as the smallest integer $s\ge 1$ for which $d y_j$ is not identically zero on $L^s(0)$, where $j=\overline{1,5}$.
It means that 
$$w=(w_1,w_2,w_3,w_4,w_5)=(1,1,2,3,4).
$$

In Step~3, the nonlinear change of variables from $(y_1,...,y_5)$ to $(z_1,...,z_5)$ is defined recursively by~\cite[Theorem~1.5]{bellaiche2005geometry}:
\begin{equation}\label{zy_transformation}
z_q = y_q - \sum_{(\alpha,w) < w_q} \frac{1}{\alpha_1! \cdots \alpha_{q-1}!}(Y_1^{\alpha_1}\cdots Y_{q-1}^{\alpha_{q-1}} y_q)(0) z_1^{\alpha_1}\cdots z_{q-1}^{\alpha_{q-1}},%\quad q=1, \,...,\, 5,
\end{equation}
where $(\alpha,w)=w_1\alpha_1 + ...+ w_5\alpha_5=\alpha_1+\alpha_2+2\alpha_3+3\alpha_4+4\alpha_5$, and the vector fields $Y_s$ are considered as differential operators.
Taking into account the transformation given in~\eqref{xy_transform}, the coordinate-wise representation of equation~\eqref{zy_transformation} is expressed as:
{\small
\begin{equation}\label{zy_coordinates}
\begin{aligned}
z_1 =& y_1 = x_1,\; z_2 = y_2 = x_3,\\
z_3 =& -x_2 + \sum_{\alpha_1+\alpha_2<2}\frac{1}{\alpha_1! \alpha_2!} (Y_1^{\alpha_1} Y_2^{\alpha_2} x_2)(0) z_1^{\alpha_1} z_2^{\alpha_2},\\
z_4 =& y_4 - \sum_{\alpha_1+\alpha_2+2\alpha_3<3}\frac{1}{\alpha_1! \alpha_2! \alpha_3!} (Y_1^{\alpha_1} Y_2^{\alpha_2}
Y_3^{\alpha_3} y_4)(0) z_1^{\alpha_1} z_2^{\alpha_2} z_3^{\alpha_3},\\
z_5 =& y_5 - \sum_{\alpha_1+\alpha_2+2\alpha_3+3\alpha_4<4}\frac{1}{\alpha_1! \cdots \alpha_4!} (Y_1^{\alpha_1} \cdots
Y_4^{\alpha_4} y_5)(0) z_1^{\alpha_1} \cdots z_4^{\alpha_4}.
\end{aligned}
\end{equation}}

Straightforward but cumbersome computations reveal that many terms on the right-hand side of~\eqref{zy_coordinates} vanish,
simplifying the transformation in~\eqref{zy_coordinates} to:
\begin{equation}\label{zy_coordinates_final}
\begin{aligned}
z_1 =& x_1,\; z_2 = x_3,\\
z_3 =& -x_2,\; z_4=y_4,\\
z_5=&y_5 - \beta x_3^3,
\end{aligned}
\end{equation}
where
$$
\beta = \frac{d_0\ell_2 \left[ d_0^2(d_1 + \ell_1)(\ell_1 - \ell_2 - d_1) + \ell_1^3(d_0 + \ell_2) \right] }{6(d_0+\ell_2)\ell_1^2}.
$$

In Step~4, we compute the time derivatives of $z_j$ given by~\eqref{zy_coordinates_final} along the trajectories of system~\eqref{car_offhooking}.
As a result, we obtain the following system of ordinary differential equations:
\begin{equation}\label{z_system_total}
\begin{aligned}
\dot z_1 =& u_1 + r_1(z,u),\\
\dot z_2 =& u_2,\\
\dot z_3 =& -z_2 u_1 + r_3(z,u),\\
\dot z_4 =& -z_3 u_1 - \alpha z_2^2 u_2 + r_4(z,u),\\
\dot z_5 =& (\kappa z_2^3-z_4) u_1 - \theta z_2 z_3 u_2 + r_5(z,u),
\end{aligned}
\end{equation}
where $r_1(z,u)=u_1 O(|z_2|^2)$, $r_3(z,u)=u_1 O(|z_2|^3)$ and, for bounded $u$, the remainder terms $r_4(z,u)$ and $r_5(z,u)$ are of degree at least $w_4$ and $w_5$ as $z\to 0$, respectively.
The parameters of system~\eqref{z_system_total} are defined as follows:
{\footnotesize
\begin{eqnarray}
% \nonumber % Remove numbering (before each equation)
\label{eq_alpha}
\alpha=&\frac{d_0^3[\ell_1^3 + \ell_1^2 \ell_2 - \ell_1\ell_2^2 - d_1 \ell_2(d_1+\ell_2)]
+d_0\ell_1^3 [d_0 (\ell_1 + 2\ell_2) + \ell_2(\ell_1 + \ell_2)]
}{2(d_0 + \ell_2)\ell_1^3},\\
\label{eq_kappa}
\kappa=&
\frac{d_0^2 \bigl[ \frac32 \ell_2^2(d_0+\ell_1)(d_1 + \ell_1) - \ell_2(\ell_1^3 + \frac12 d_0 \ell_1^2 - d_1\ell_1(d_0 + \frac32 d_1) - \frac32 d_0 d_1^2) + \frac12 d_0 d_1^2 \ell_1\bigr]}
{3(d_0 + \ell_2)\ell_1^3},\\
\label{eq_theta}
\theta =&\frac{d_0^2 \bigl[ \ell_2^2(d_1 + \ell_1)(d_0+\ell_1) + (d_0 d_1 + (d_0 + d_1)\ell_1)d_1\ell_2 + d_0 d_1^2 \ell_1\bigr]}{(d_0 + \ell_2)\ell_1^3}.
\end{eqnarray}}

By truncating the remainder terms in system~\eqref{z_system_total}, we express its approximation in a neighborhood of $z=0$ in the form
\begin{equation}\label{sys_nilpotent}
\dot z = u_1 g_1(z) + u_2 g_2(z),\quad z\in {\mathbb R}^5,\; u\in {\mathbb R}^2,
\end{equation}
$$
z=
\begin{pmatrix}
z_1 \\ z_2 \\ z_3 \\ z_4 \\ z_5
\end{pmatrix},\;
g_1(z)=
\begin{pmatrix}
1 \\ 0 \\ -z_2 \\ -z_3 \\ \kappa z_2^3-z_4
\end{pmatrix},\;
g_2(z) = \begin{pmatrix}
0 \\ 1 \\ 0 \\ -\alpha z_2^2 \\ -\theta z_2 z_3
\end{pmatrix}.
$$
The right-hand side of the differential equation for $\dot z_j$ in~\eqref{sys_nilpotent} is a weighted homogeneous polynomial of degree $w_j-1$ for each $j=1,2,3,4,5$.

\section{DESIGN OF TIME-VARYING FEEDBACK LAWS}

To stabilize the trivial equilibrium of system~\eqref{sys_nilpotent}, we will extend the approach of~\cite{ZGB16} for the case of higher degree nonholonomic systems.

First, we evaluate the matrix
$$
{\cal G}(z)=\bigl(g_1,g_2,[g_1,g_2],[g_1,[g_1,g_2]],[g_1,[g_1,[g_1,g_2]]]\bigr)(z),
$$
the columns of which are composed of the Lie brackets of the vector fields $g_1(z)$ and $g_2(z)$. 
We have
\begin{equation}\label{G_matrix}
{\cal G}(z)=\begin{pmatrix}
1 & 0 & 0 & 0 & 0\\
0 & 1 & 0 & 0 & 0\\
-z_2 & 0 & 1 & 0 & 0\\
-z_3 & -\alpha z_2^2 & 0 & 1 & 0\\
\kappa z_2^3 - z_4 & -\theta z_2 z_3 & -(\alpha + 3 \kappa - \theta) z_2^2 & 0 & 1\\
\end{pmatrix},
\end{equation}
ensuring that the Lie Algebra Rank Condition is satisfied: $\det {\cal G}(z)= 1$ for all $z\in {\mathbb R}^5$.
Hence, by the Chow--Rashevskii theorem~\cite{NS}, system~\eqref{sys_nilpotent} is controllable.

According to the control design scheme developed in~\cite{ZGB16}, we construct a family of time-varying feedback controllers.
These controllers are in the form of $\varepsilon$-periodic trigonometric polynomials in $t$, with coefficients that depend on $z$.
In the sequel, we use the following parametrization for the controllers:
\begin{equation}\label{controls_original}
\begin{aligned}
u_1^\varepsilon&(t,z) =  a_1(z) + \left(2k_{12}\omega |a_{12}(z)|\right)^{1/2} \cos (k_{12}\omega t)\\
& + (2k_{112}\omega)^{2/3} |a_{112}(z)|^{1/3}\cos (k_{112}\omega t)\\
&+ 2(k_{1112}\omega)^{3/4} |a_{1112}(z)|^{1/4}\cos (k_{1112}\omega t),\\
u_2^\varepsilon&(t,x) =  a_2(z) + \left(2k_{12}\omega |a_{12}(z)|\right)^{1/2} ({\rm sign}\,a_{12}(z))\, \sin (k_{12}\omega t)\\
& - 2(2k_{112}\omega)^{2/3} |a_{112}(z)|^{1/3}({\rm sign}\,a_{112}(z))\,\cos (2k_{112}\omega t)\\
&- 6(k_{1112}\omega)^{3/4} |a_{1112}(z)|^{1/4}({\rm sign}\,a_{1112}(z))\,\sin (3 k_{1112}\omega t),\\
\end{aligned}
\end{equation}
where $\omega=2\pi/\varepsilon$,
 and the positive integers $k_{12}$, $k_{112}$, $k_{1112}$ should be chosen
to avoid resonances up to the fourth order. To be precise, we recall that a tuple of numbers  $(k_1,\dots,k_s)$ is in \emph{resonance of order $K$} ($K\in\mathbb N$) if there exist setwise coprime integers  $(c_1,\dots,c_s )$ such that 
$\sum_{i=1}^s|c_i|=K$ and $\sum_{i=1}^sc_sk_s=0$.

The vector of state-dependent coefficients $a(z)=(a_1(z),a_2(z),a_{12}(z),a_{112}(z),a_{1112}(z))^\top$ is computed in terms of the inverse matrix for~\eqref{G_matrix}:
\begin{equation}\label{a_form}
a(z)=-{\cal G}^{-1}(z)\left(\frac{\partial Q(z)}{\partial z_1}, \frac{\partial Q(z)}{\partial z_2} \frac{\partial Q(z)}{\partial z_3}, \frac{\partial Q(z)}{\partial z_4},\frac{\partial Q(z)}{\partial z_5}\right)^\top,
\end{equation}
where $Q(z)$ is a positive definite quadratic form. For further computations, we take $Q(z)=\frac{\gamma}{2}(z_1^2+z_2^2+z_3^2+z_4^2+z_5^2)$, where $\gamma$ is a positive constant.
Then the components of $a(z)$ take the following form:
\begin{equation}\label{a_components}
\begin{aligned}
a_1(z) &= -\gamma z_1,\\
a_2(z) &= -\gamma z_2,\\
a_{12}(z)&= -\gamma(z_1 z_2 + z_3),\\ 
a_{112}(z)&=  -\gamma( z_1 z_3 + z_4 + \alpha z_2^3),\\
a_{1112}(z)&=  -\gamma ( (\alpha + 2\kappa - \theta)z_1 z_2^3 \\
&\qquad+ (\alpha + 3\kappa)z_2^2 z_3 + z_1 z_4 + z_5).
\end{aligned}
\end{equation}
We will consider system~\eqref{sys_nilpotent} in a $\Delta$-neighborhood of zero, denoted by $B_\Delta(0)=\{z\in{\mathbb R}^5\,:\,\|z\|<\Delta\}$.

The contribution of this section establishes the exponential convergence property for the solutions of system~\eqref{sys_nilpotent} with the controls~\eqref{controls_original}--\eqref{a_components} defined in the sense of sampling.
To formulate the main result, we consider the partition of $t\in[0,+\infty)$ into intervals
$$
0=t_0 < t_1=\varepsilon < t_2= 2\varepsilon <...< t_j= j\varepsilon <...\,
$$
where $\varepsilon>0$ is given. We call a vector function $z:[0,+\infty)\to {\mathbb R}^5$ a {\em $\pi_\varepsilon$-solution of system~\eqref{sys_nilpotent} with the feedback law~\eqref{controls_original}} (see, e.g.,~\cite{ZGB16}), if $z(t)$ is continuous on $[0,+\infty)$ and $z(t)$ satisfies the differential equations
$$
\dot z(t) = u_1^\varepsilon (t,z(t_j)) g_1(z(t)) + u_2^\varepsilon (t,z(t_j)) g_2(z(t)),\;
t\in [t_j,j_{j+1}),
$$
for each $j=0,1,2,...$ . Thus, a $\pi_\varepsilon$-solution $z(t)$ satisfies system~\eqref{sys_nilpotent} with piecewise-continuous controls of the form
\begin{equation}
    \label{controls_sampling}
u_s=u_s^\varepsilon(t,z(t_j)),\quad t\in[j\varepsilon,(j+1)\varepsilon).
\end{equation}

\begin{figure*}[thpb]
 \includegraphics[width=1\linewidth] {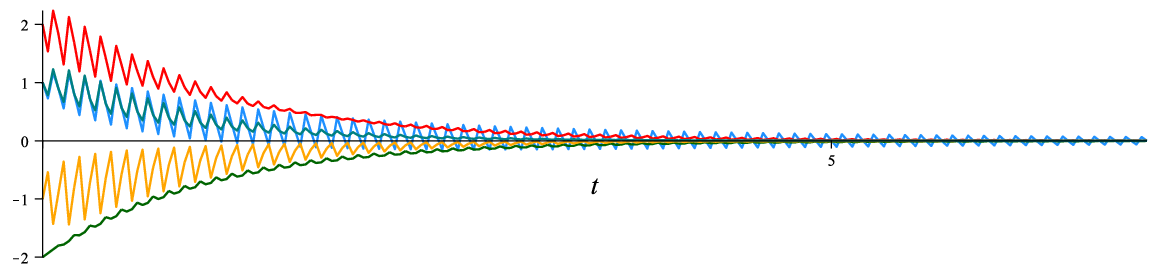}
 %\caption{Here we put $z(0)=(1,-2,2,-1,1)$, $\varepsilon=0.1$, $\gamma=1$. }
\includegraphics[width=1\linewidth] {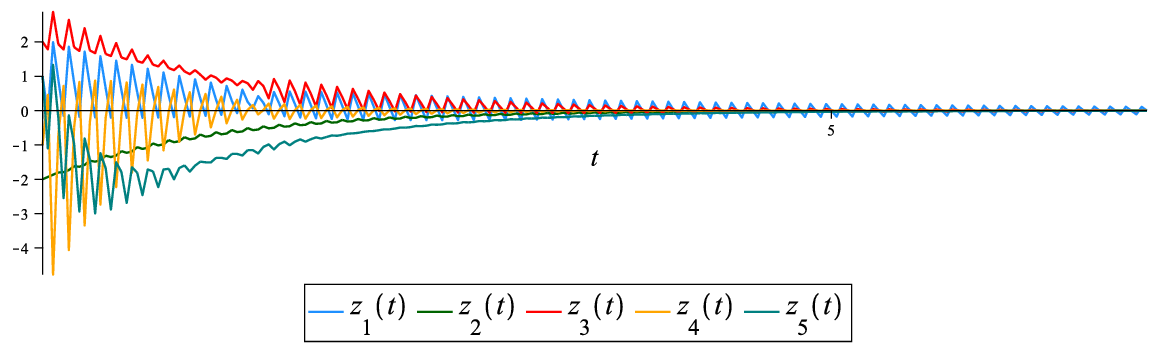}
 \caption{Time plots of the components of solutions to system~\eqref{sys_nilpotent} with controls~\eqref{controls_sampling} and parameters~\eqref{parameters1} (top) and~\eqref{parameters2} (bottom).} 
\end{figure*}
\begin{theorem} Assume that the frequency multipliers $k_{12}$, $k_{112}$, $k_{1112}$ in~\eqref{controls_original} satisfy the condition:
\begin{itemize}
\item[$C1)$] there are no resonances of order up to $4$ in each of the following tuples: $(k_{12},\;k_{112}, k_{1112})$, $(k_{12},\;2 k_{112}, k_{1112})$, $(k_{12},\;k_{112}, 3k_{1112})$, and $(k_{12},\;2 k_{112}, 3k_{1112})$.
\end{itemize}

Then, for any $\Delta>0$, there exists an $\bar\varepsilon>0$ such that for any $\varepsilon\in(0,\bar\varepsilon)$ and any $z^0\in B_\Delta(0)$, the corresponding $\pi_\varepsilon$-solution $z(t)$ of system~\eqref{sys_nilpotent} with the initial data $z(0)=z^0$ and feedback law~\eqref{controls_original} converges exponentially to zero as $t\to+\infty$.
\end{theorem}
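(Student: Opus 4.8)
The plan is to exploit the fact that, by construction~\eqref{a_form}, the coefficient vector satisfies $\mathcal{G}(z)a(z)=-\nabla Q(z)=-\gamma z$, so that the \emph{averaged} (Lie-bracket) system associated with the oscillating controls~\eqref{controls_original} coincides with the exponentially stable gradient flow $\dot{\bar z}=-\gamma\bar z$. The whole argument then reduces to showing that the sampled time-$\varepsilon$ map of the $\pi_\varepsilon$-solution is an $O(\varepsilon)$-close, contracting perturbation of the time-$\varepsilon$ flow of $-\gamma z$, followed by a discrete Lyapunov argument on $Q$. Throughout I would freeze the sampled value $p:=z(t_j)$ on each interval $[t_j,t_{j+1})$ and study the Poincar\'e-type map $p\mapsto z(t_{j+1})=:\Phi_\varepsilon(p)$, extending the scheme of~\cite{ZGB16} to accommodate the fourth-order bracket $[g_1,[g_1,[g_1,g_2]]]$ that is new to the present degree-four setting.

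First I would expand $\Phi_\varepsilon(p)$ by the Chen--Fliess/Magnus series of system~\eqref{sys_nilpotent}. Because the Lie algebra generated by $g_1,g_2$ is nilpotent and the columns of $\mathcal{G}$ exhaust the iterated brackets up to length four, this expansion terminates, and $\Phi_\varepsilon(p)$ is represented through iterated integrals of $u_1^\varepsilon,u_2^\varepsilon$ over one period $\varepsilon=2\pi/\omega$ multiplied by the corresponding iterated brackets evaluated at $p$. Evaluating these integrals, the constant parts $a_1,a_2$ contribute $\varepsilon(a_1 g_1+a_2 g_2)(p)$, while the single-frequency quadratic, cubic, and quartic combinations of the oscillating terms, whose amplitudes $(2k_{12}\omega|a_{12}|)^{1/2}$, $(2k_{112}\omega)^{2/3}|a_{112}|^{1/3}$, $2(k_{1112}\omega)^{3/4}|a_{1112}|^{1/4}$ and sign/phase factors are precisely calibrated in~\eqref{controls_original}, reproduce $\varepsilon\,a_{12}[g_1,g_2](p)$, $\varepsilon\,a_{112}[g_1,[g_1,g_2]](p)$, and $\varepsilon\,a_{1112}[g_1,[g_1,[g_1,g_2]]](p)$. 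The role of hypothesis $C1)$ is exactly to annihilate the remaining cross-frequency iterated integrals: the absence of resonances of order up to $4$ in each listed tuple forces every mixed trigonometric product arising up to the fourth iterated integral to have zero mean over the period. Collecting terms gives $\Phi_\varepsilon(p)=p+\varepsilon\,\mathcal{G}(p)a(p)+R_\varepsilon(p)=(1-\gamma\varepsilon)p+R_\varepsilon(p)$.

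The heart of the argument, and the step I expect to be the main obstacle, is the estimate of the remainder $R_\varepsilon(p)$. The difficulty is that the control amplitudes grow like $\omega^{1/2},\omega^{2/3},\omega^{3/4}$ and hence blow up as $\varepsilon\to0$, so classical averaging for bounded inputs does not apply directly; the $\omega^{3/4}$ term is precisely the new feature forcing non-resonance up to the fourth order. The resolution is to use the weighted homogeneity of Section~IV: the generators $g_1,g_2$ are homogeneous of degree $-1$, the coefficients $a_1,a_2,a_{12},a_{112},a_{1112}$ in~\eqref{a_components} are homogeneous of degrees $1,1,2,3,4$ with respect to the dilation $\delta_\lambda(z)=(\lambda z_1,\lambda z_2,\lambda^2 z_3,\lambda^3 z_4,\lambda^4 z_5)$, and the closed-loop system~\eqref{sys_nilpotent},~\eqref{controls_original} is invariant under the combined rescaling $z\mapsto\delta_\lambda z$, $t\mapsto\lambda t$, $\omega\mapsto\omega/\lambda$. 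This invariance lets me restrict the analysis of $\Phi_\varepsilon$ to the compact shell $\{\|z\|_w=1\}$ in a homogeneous norm $\|\cdot\|_w$ of degree one, where all amplitudes are bounded; there the termination of the Chen--Fliess series and the nilpotency guarantee that every term not already accounted for carries a strictly higher homogeneous degree and a strictly higher power of $\varepsilon$. Transporting the estimate back by $\delta_\lambda$ would yield $\|R_\varepsilon(p)\|_w\le \varepsilon\,\rho(\varepsilon)\,\|p\|_w$ with $\rho(\varepsilon)\to0$ as $\varepsilon\to0$, uniformly for $p\in B_\Delta(0)$.

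Finally I would close the loop with a Lyapunov argument. Since $\mathcal{G}(p)a(p)=-\gamma p$, the relation $\Phi_\varepsilon(p)=(1-\gamma\varepsilon)p+R_\varepsilon(p)$ together with the remainder bound gives, for $\varepsilon$ small enough, a uniform contraction $Q(z(t_{j+1}))\le(1-c\varepsilon)\,Q(z(t_j))$, equivalently $\|z(t_{j+1})\|_w\le(1-c'\varepsilon)\|z(t_j)\|_w$, for some constants $c,c'>0$ and all $z(t_j)\in B_\Delta(0)$; an auxiliary Gronwall-type bound on each interval shows $\sup_{t\in[t_j,t_{j+1}]}\|z(t)\|\le C\|z(t_j)\|$, so the ball $B_\Delta(0)$ stays invariant and the estimate is self-consistent. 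Iterating the contraction gives geometric decay of the sampled values, $\|z(t_j)\|_w\le(1-c'\varepsilon)^{j}\|z^0\|_w$, and combining this with the intra-interval bound and the continuity of the $\pi_\varepsilon$-solution yields the exponential estimate $\|z(t)\|\le M e^{-\lambda t}\|z^0\|$ for suitable $M,\lambda>0$; the threshold $\bar\varepsilon$ is then taken as the level below which $\rho(\varepsilon)$ is small enough for the contraction to hold. This completes the plan.
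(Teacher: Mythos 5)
Your overall architecture matches the paper's proof: sample-and-hold on intervals of length $\varepsilon$, expansion of the time-$\varepsilon$ map by the Chen--Fliess series (which terminates because $g_1,g_2$ are homogeneous of degree $-1$, so the Lie algebra is nilpotent of step $4$), use of condition $C1)$ to reduce the averaged first-order term to $\varepsilon\,\mathcal{G}(z^0)a(z^0)=-\gamma\varepsilon z^0$, and then a discrete contraction iterated over the sampling instants together with an intra-interval bound. However, there is a genuine gap at exactly the step you yourself call the heart of the argument: the remainder estimate. Your resolution rests on the claim that the closed loop \eqref{sys_nilpotent}, \eqref{controls_original} is invariant under the combined rescaling $z\mapsto\delta_\lambda z$, $t\mapsto\lambda t$, $\omega\mapsto\omega/\lambda$. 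This is false: under that rescaling the four summands of $u_1^\varepsilon$ scale by \emph{different} factors, namely $a_1(\delta_\lambda z)=\lambda\,a_1(z)$, while $\bigl(2k_{12}(\omega/\lambda)|a_{12}(\delta_\lambda z)|\bigr)^{1/2}=\lambda^{1/2}\bigl(2k_{12}\omega|a_{12}(z)|\bigr)^{1/2}$, and the cubic- and quartic-root terms scale by $\lambda^{1/3}$ and $\lambda^{1/4}$; since an invariance would require all terms to scale identically, no such combined symmetry exists. The only true symmetry is the pure state dilation $z\mapsto\delta_\lambda z$ with $t$, $\varepsilon$, $\omega$ held fixed (then every amplitude scales by $\lambda$, because $a_{12},a_{112},a_{1112}$ have weighted degrees $2,3,4$ and enter through roots of order $2,3,4$). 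That symmetry does permit a reduction to the unit homogeneous shell, but it does nothing to tame the $\varepsilon$-dependence: on the shell the amplitudes are still of order $\omega^{1/2},\omega^{2/3},\omega^{3/4}\to\infty$ as $\varepsilon\to 0$, contradicting your assertion that "all amplitudes are bounded" there. Hence your bound $\|R_\varepsilon(p)\|_w\le\varepsilon\rho(\varepsilon)\|p\|_w$ with $\rho(\varepsilon)\to 0$ is not established by the argument you give.

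The paper closes this step by direct quantitative estimates rather than scaling: from \eqref{a_components} and H\"older's inequality it obtains $|u_s^\varepsilon(z^0,\tau)|\le c_u\varepsilon^{-3/4}\|z^0\|^{1/4}$, hence $\|z(t)-z^0\|\le c_z\varepsilon^{1/4}\|z^0\|^{1/4}$ on $[0,\varepsilon]$; feeding these bounds into the terminating Chen--Fliess expansion and invoking $C1)$ yields $z(\varepsilon)=(1-\gamma\varepsilon)z^0+\varepsilon^{4/3}\|z^0\|^{5/4}R(z^0,\varepsilon)$ with $\|R\|\le M_D$ on $B_\Delta(0)$, which is exactly what the contraction $\|z(\varepsilon)\|\le(1-\bar\gamma\varepsilon)\|z^0\|$ needs once $\varepsilon^{1/3}\Delta^{1/4}M_D\le\gamma-\bar\gamma$. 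The fractional powers $\omega^{1/2},\omega^{2/3},\omega^{3/4}$ are calibrated precisely so that products of two, three, or four oscillating factors, integrated over the iterated simplices, come out at order $\varepsilon$, and everything beyond the resonant (mean) part is of order $\varepsilon^{4/3}$; this bookkeeping is what your proposal replaces with the invalid symmetry. A secondary inaccuracy: your Gronwall claim $\sup_{t\in[t_j,t_{j+1}]}\|z(t)\|\le C\|z(t_j)\|$ and the final linear estimate $\|z(t)\|\le Me^{-\lambda t}\|z^0\|$ are too strong, since the intra-interval deviation is only $O\bigl(\varepsilon^{1/4}\|z(t_j)\|^{1/4}\bigr)$, which dominates $\|z(t_j)\|$ for small $\|z(t_j)\|$; correspondingly the paper proves $\|z(t)\|\le c_{zt}\|z^0\|^{1/4}e^{-\bar\gamma t/4}$, which still gives exponential convergence but not with a constant linear in $\|z^0\|$.
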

\begin{proof}
To prove this theorem, we adopt the approach outlined in~\cite{ZGB16,GZ23}. However, the results presented there cannot be directly applied here, as they are intended for nonholonomic systems of degree two and three. 

Given any $\Delta>0$ and any $z^0\in D=B_\Delta(0)$, let $z(t)$ be a solution of system~\eqref{sys_nilpotent} with the initial condition $z(0)=z^0$ and control~\eqref{controls_sampling}, and let $\varepsilon_0>0$ be such that $z(t)\in D$ for all $t\in[0,\varepsilon_0]$.  Without loss of generality, we assume $\varepsilon_0\le 1$.

First, let us derive several  preliminary estimates. The integral representation of solutions yields
\begin{equation}\label{aprior1}
\begin{aligned}
\|z(t)-z^0\|&\le\int\limits_0^\varepsilon\sum_{s=1,2}\|g_s(z(\tau))\||u_s^\varepsilon(z_0,\tau)|d\tau\\
&\le\sqrt 2
\max\limits_{s=1,2,0\le t\le\varepsilon}|u_s^\varepsilon(z_0,\tau)|\int\limits_0^\varepsilon\|g(z(\tau))\|d\tau.
\end{aligned}
\end{equation}
We estimate 
$$
\begin{aligned}
\|g(z)\|^2&=\|g_1(z)\|^2+\|g_2(z)\|^2\\
&\le 2+\|z\|^2+c_{g1}\|z\|^4+c_{g2}\|z\|^6
\end{aligned}
$$
with $c_{g1}=\frac{1}{2}\max\left\{\kappa,\theta,3\kappa+\theta+2\alpha^2\right\}=\frac{1}{2}(3\kappa+\theta+2\alpha^2)$, $c_{g2}=\kappa^2$. Besides, one can show that there exists a $c_u>0$
such that
\begin{equation}\label{est_u}
\begin{aligned}
 |u_s^\varepsilon(z^0,\tau)|\le c_u\varepsilon^{-3/4}\|z^0\|^{1/4},
\end{aligned}
\end{equation}
for all  $z^0\in D$ and $\varepsilon\in[0,1]$.  The estimate~\eqref{est_u} can be obtained from~\eqref{a_components} using H\"older's inequality. We omit the explicit expression for $c_u$ because of space limits. Thus, for all  $z^0\in D$, $\varepsilon\in(0,\varepsilon_0)$, and $t\in[0,\varepsilon]$,
\begin{equation}
    \label{est_z}
\|z(t)-z^0\|\le c_z\varepsilon^{1/4}\|z^0\|^{1/4},
\end{equation}
with some $c_z>0$. 
Next, consider the Chen--Fliess series expansion~\cite{NS} of $z(t)$ at $t=\varepsilon$:
\begin{equation}\label{ChenFliess0}
    \begin{aligned}
       z&(\varepsilon)=z^0+\sum_{s=1,2} g_s(z^0)\int\limits_0^\varepsilon u_s^\varepsilon(z^0,\tau)\\
       +&\sum_{l=2}^4\sum_{s_1,\dots,s_l=1}^2L_{g_{s_l}}\dots L_{g_{s_2}}g_{s_1}(z^0)\\
       &\times \int\limits_0^\varepsilon\int\limits_0^{\tau_1}\dots \int\limits_0^{\tau_{l-1}}u_{s_1}^\varepsilon(z^0,\tau_1)\dots u_{s_l}^\varepsilon(z^0,\tau_l)d\tau_1\dots d\tau_l\\
       +&\Phi(z^0,\varepsilon),
    \end{aligned}
\end{equation}
where $\Phi$ is the remainder, of the Chen--Fliess series expansion,
$$
\begin{aligned}
\Phi(z^0,\varepsilon)  =  \sum_{l=2}^4&\sum_{s_1,\dots,s_l=1}^2  \int\limits_0^\varepsilon\int\limits_0^{\tau_1}\dots \int\limits_0^{\tau_{l-1}} L_{g_{s_l}}\dots L_{g_{s_2}}g_{s_1}(z(\tau_l))\\
       &\times u_{s_1}^\varepsilon(z^0,\tau_1)\dots u_{s_l}^\varepsilon(z^0,\tau_l)d\tau_1\dots d\tau_l.
\end{aligned}
$$
As the $j$-th component of each vector field $g_1$ and $g_2$ in system~\eqref{sys_nilpotent} is a weighted homogeneous polynomial of degree $w_j-1$~\cite{bellaiche2005geometry} , the vector fields $g_1$ and $g_2$ are homogeneous of degree $-1$ with respect to the dilation
$$
(z_1,z_2,z_3,z_4,z_5)\mapsto (\lambda z_1, \lambda z_2, \lambda^2 z_3, \lambda^3 z_4,\lambda^4 z_5).
$$
This implies that the Lie algebra generated by $\{g_1,g_2\}$ is nilpotent of step $r=4$ (see, e.g.,~\cite{grayson1989vector}). Consequently, any iterated Lie bracket of $g_1$ and $g_2$ with a length greater than $4$ is zero.

Because of the nilpotent property of system~\eqref{sys_nilpotent}, the Chen--Fliess series expansion~\eqref{ChenFliess0}~ of $z(t)$ on the interval $t\in [0,\varepsilon]$ does not contain the terms with iterated integrals of $u_1$ and $u_2$ of length higher than $4$. Hence, $\Phi(z_0,\varepsilon)\equiv 0$ in~\eqref{ChenFliess0}.
We compute the Chen--Fliess  expansion~\eqref{ChenFliess0} explicitly by assuming that condition $C1)$ holds:
\begin{equation}\label{ChenFliess}
\begin{aligned}
z(\varepsilon)&=z^0+\varepsilon\Bigl\{a_1(z^0)g_1(z^0)+a_2(z^0)g_2(z^0)\\
&+a_{12}(z^0)[g_1,g_2](z^0) + a_{112}(z^0)[g_1,[g_1,g_2]](z^0)\\
&+ a_{1112}(z^0)[g_1,[g_1,[g_1,g_2]]](z^0)\Bigr\} + \varepsilon^{4/3}\|z^0\|^{5/4}R(z^0,\varepsilon)\\
&=z^0+\varepsilon\mathcal G(z^0)a(z^0)+\varepsilon^{4/3}\|z^0\|^{5/4}R(z^0,\varepsilon),
\end{aligned}
\end{equation}
where the function $R(z^0,\varepsilon)$ is bounded,
i.e. $\|R(z^0,\varepsilon)\| \le M_D$ for all $z^0\in B_\Delta(0)$ and $\varepsilon\in (0,1]$, provided that the domain $D$ is bounded. The explicit expression of $R(z^0,\varepsilon)$ is omitted because of the space limits. Taking into account~\eqref{a_form}, we conclude that
\begin{equation}
    \label{z_eps}
    z(\varepsilon)=(1-\gamma\varepsilon)z^0+\varepsilon^{4/3}\|z^0\|^{5/4}R(z^0,\varepsilon).
\end{equation}
For any $\bar\gamma\in(0,\gamma)$, let us put $\varepsilon_2=\min\{\frac{1}{\bar\gamma},\left(\frac{\gamma-\bar\gamma}{\Delta^{1/4}M_D}\right)^{3}\}$. Then, for any $\varepsilon\in(0,\min\{\varepsilon_0,\varepsilon_1,\varepsilon_2\})$, 
$$
\begin{aligned}
    \|z(\varepsilon)\|&\le (1-\gamma\varepsilon)\|z^0\|+\varepsilon^{4/3}M_D\|z^0\|^{5/4}\\
    &\le (1-\varepsilon(\gamma-\varepsilon^{1/3}\Delta^{1/4}M_D))\|z^0\|\\
    &\le (1-\varepsilon\bar\gamma)\|z^0\|\le \|z^0\|e^{-\bar\gamma\varepsilon}.
\end{aligned}
$$
Thus, $\|z(\varepsilon)\|\le\|z^0\|<\Delta$, and we may repeat the above reasoning. Consequently, we arrive at the following conclusion:  
$$
\begin{aligned}
 \| z(j\varepsilon)\|&\le (1-\varepsilon\bar\gamma)\|z((j-1)\varepsilon)\| \le (1-\varepsilon\bar\gamma)^2\|z((j-2)\varepsilon)\|\\
  &\le\dots\le  (1-\varepsilon\bar\gamma)^j\|z^0\|\le\|z^0\|e^{-\bar\gamma j\varepsilon},
\end{aligned}
$$
and
$$
\begin{aligned}
    \|z(t)-z^{j\varepsilon}\|&\le c_z\varepsilon^{1/4}\|z(j\varepsilon)\|^{1/4}\\
    &\le  c_z\varepsilon^{1/4}\|z^0\|^{1/4}e^{-\bar\gamma j\varepsilon/4}.
\end{aligned}
$$
For an arbitrary $t\ge0$,  let $j=\left[\frac{t}{\varepsilon}\right]$ denotes the integer part of $\frac{t}{\varepsilon}$. Then $t\in[j\varepsilon,(j+1)\varepsilon]$, and
$$
\begin{aligned}
    \|z(t)\|&\le \|z(j\varepsilon)\|+\|z(t)-z^{j\varepsilon}\|\\
&\le c_{zt}\|z^0\|^{1/4}e^{-\bar\gamma t/4},
\end{aligned}
$$
where $z_{zt}=e^{\bar\gamma\varepsilon/4}(\Delta^{3/4}+c_z)$, what proves the theorem. 
\end{proof}

\section{SIMULATION RESULTS}

For the numerical simulations, we consider two choices of physical length parameters:
$$
   d_0=d_1=0.1,\, \ell_1=\ell_2=1,
$$ 
and
$$
   d_0=d_1= \ell_1=\ell_2=1,
$$
which correspond to 
\begin{equation}
    \label{parameters1}
\alpha=0.10495, \; \kappa=0.0024,\; \theta=0.0112,
\end{equation}
and
\begin{equation}
    \label{parameters2}
\alpha=1, \; \kappa=1.5,\; \theta=4,
\end{equation}
respectively.
In both cases, we put
$  \varepsilon=0.1$, $\gamma=1$, 
$k_{12}=8$, $k_{112}=6$, $k_{1112}=5$, and initialize the solutions at $z^0=(1,-2,2,-1,1)^\top$.
The results of numerical simulations are shown in Fig.~1, which illustrates the asymptotic  convergence of the solutions of system~\eqref{sys_nilpotent} to zero.  

\section{CONCLUSIONS}

As one can see from the simulation results, the proposed time-varying controllers steer the solutions of system~\eqref{sys_nilpotent} to zero for both sets of length parameters~\eqref{parameters1} and~\eqref{parameters2}. However, the time plots for the longer hook length~\eqref{parameters2} exhibit a larger overshoot in the transient behavior. The analytic expressions~\eqref{controls_original}--\eqref{a_components} are valid for arbitrary values of mechanical parameters $\alpha$, $\kappa$, $\theta$, and arbitrary design parameters $\varepsilon>0$, $\gamma>0$. Although the exponential convergence result is limited to the nilpotent approximation~\eqref{sys_nilpotent}, we plan to investigate the applicability of our control design methodology to the original system~\eqref{car_offhooking} in future studies.

\newpage

%\addtolength{\textheight}{-12cm}   % This command serves to balance the column lengths
                                  % on the last page of the document manually. It shortens
                                  % the textheight of the last page by a suitable amount.
                                  % This command does not take effect until the next page
                                  % so it should come on the page before the last. Make
                                  % sure that you do not shorten the textheight too much.

%%%%%%%%%%%%%%%%%%%%%%%%%%%%%%%%%%%%%%%%%%%%%%%%%%%%%%%%%%%%%%%%%%%%%%%%%%%%%%%%

\section*{APPENDIX}

The components $f_{34}$, $f_{44}$, $f_{54}$, $f_{35}$, $f_{45}$, and $f_{55}$ of the matrix ${\cal F}(x)$ are defined by the following formulas:
{\small
$$
\begin{aligned}
f_{34}(x) &=  -\frac{d_0+\ell_1\cos(x_3 - x_4)}{\ell_1^2},\\
f_{44}(x) &= -\frac{\ell_1+d_0 \cos(x_3 - x_4)}{\ell_1^3},\\
f_{54}(x) &= -\frac{d_0 + \ell_1 \cos(x_3 - x_4)}{\ell_1^4},\\
f_{35}(x) &= \frac{1}{\ell_1^2 \ell_2^2}\Bigl\{
(d_1 + \ell_1) \cos(x_4 - x_5) \bigl[
\ell_1 (d_0 \cos(x_3 - x_5) + \ell_2) \\
&\times \cos(x_3 - x_4) + d_0 (\ell_1 \sin(x_3 - x_4)\sin(x_3 - x_5) + \ell_2)
\bigr]\\
& - \ell_1 \bigl[
d_0 (d_1 + \ell_1)\cos(x_3 - x_4) \sin(x_5 - x_3)\sin(x_4 - x_5) \\
& + d_0 \ell_1 + \cos(x_3 - x_5) \Bigl(\ell_1 \ell_2 \\
& + d_0 (d_1 + \ell_1)\sin(x_3 - x_4) \sin(x_4 - x_5)
\Bigr)
\bigr]
\Bigr\},\\
f_{45}(x) &= \frac{1}{\ell_1^3 \ell_2^3}\Bigl\{
d_0 \ell_1 (d_1 + \ell_1)^2\cos^2(x_4 - x_5)\sin(x_3 - x_4)\\
&\times
(\sin(x_3 - x_4)\cos(x_3 - x_5) + \sin(x_5 - x_3)\cos(x_3 - x_4))\\
& + (d_1 + \ell_1)\cos(x_4 - x_5)\Bigl[
d_0 \ell_1^2\cos(x_3 - x_4)\cos^2(x_3 - x_5) \\
& + \ell_1 \cos(x_3 - x_5)
\bigl(
\cos(x_3 - x_4)(d_0(d_1 + \ell_1) \sin(x_4 - x_5)\\
& \times\sin(x_3 - x_4) + \ell_1 \ell_2) + d_0 (\ell_2 + \ell_1\sin(x_3 - x_4)\sin(x_3 - x_5) )
\bigr) \\
&
 - d_0\ell_1(d_1 + \ell_1) \sin(x_3 - x_5)\sin(x_4 - x_5)\cos^2(x_3 - x_4)\\
&
 + d_0\ell_2^2 \cos(x_3 - x_4) +
 \ell_1\bigl( \ell_2^2 + \sin(x_3 - x_5) \\
&
 \times (d_0 (d_1 + \ell_1) \sin(x_4 - x_5) + \ell_1 \ell_2 \sin(x_3 - x_4))
\bigr)
\Bigr]\\
& + \ell_1\Bigl[
d_0 \ell_1 (d_1+\ell_1) \sin(x_4-x_3) \sin(x_4-x_5)\cos^2(x_3-x_5)\\
& +\cos(x_3-x_5) \bigl(
d_0 (d_1+\ell_1)^2 \cos^2(x_3 - x_4)  \\
& + d_0 \ell_1 (d_1+\ell_1) \sin(x_3-x_5) \sin(x_4-x_5)  \cos(x_3-x_4)-\ell_1 \ell_2 \\
&
\times(d_1+\ell_1) \sin(x_3-x_4)  \sin(x_4-x_5) - 2 d_0 (\ell_1^2+d_1 \ell_1+\frac12 {d_1^2})
\bigr) \\
&
+(d_1+\ell_1) (\ell_1 \ell_2 \sin(x_4-x_5)+d_0 (d_1+\ell_1) \sin(x_3-x_4) ) \\
& \times \sin(x_3-x_5) \cos(x_3-x_4)
+d_0 (d_1+\ell_1) \ell_2 \sin(x_4-x_5) \\
& \times \sin(x_3-x_5) -\ell_1( d_0 (d_1+\ell_1) \sin(x_3-x_4) \sin(x_4-x_5)\\
& +\ell_1 \ell_2 )
\Bigr]\Bigl\},\\
f_{55}(x) &= \frac{1}{\ell_1^4 \ell_2^4}\Bigl\{
-d_0 \ell_1 (d_1 + \ell_1)^2 \cos^3(x_3 - x_4) \\
& \times \bigl[((d_1 + \ell_1)\cos(x_4 - x_5) + \ell_2)\cos(x_3 - x_5)\\
& +(d_1 + \ell_1) \sin(x_3 - x_5)\sin(x_4 - x_5) \bigr]- (d_1 + \ell_1)\ell_1\\
& \times \cos^2(x_3 - x_4) \bigl[
4d_0 \ell_1 (d_1 + \ell_1)(\cos^2(x_4 - x_5) - \frac12) \\
&\times \cos^2(x_3 - x_5) + (d_1 + \ell_1)(-\ell_1 \ell_2 \sin^2(x_4 - x_5)\\
& + 4 d_0 \ell_1 \sin(x_3 - x_5)\sin(x_4 - x_5)\cos(x_4 - x_5)\\
& - d_0 (d_1 + \ell_1)\sin(x_4 - x_5)\sin(x_3 - x_4))\cos(x_3 - x_5)\\
&- 2 d_0 \ell_1(d_1 + \ell_1)\cos^2(x_4 - x_5) + ((\ell_1 \ell_2\sin(x_4 - x_5) \\
&+ d_0(d_1 + \ell_1)\sin(x_3 - x_4))(d_1 + \ell_1)\sin(x_3 - x_5) - d_0 \ell_1 \ell_2)\\
&\times \cos(x_4 - x_5) - d_0(d_1 + \ell_1)(\ell_1-\ell_2\sin(x_3 - x_4)\\
&\times\sin(x_3 - x_5))\bigr]
- (d_1 + \ell_1)\ell_1 \cos(x_3 - x_4) \\
&\times \bigl[
-\ell_1( 4 d_0(d_1 + \ell_1) \sin(x_4 - x_5)\sin(x_3 - x_4) + \ell_1\ell_2)\\
&\times\cos(x_4 - x_5)\cos^2(x_3 - x_5) + (4\ell_1 d_0(d_1 + \ell_1) \\
&\times\sin(x_3 - x_4)\sin(x_3 - x_5)\cos^2(x_4 - x_5) \\
\end{aligned}
$$
$$
\begin{aligned}
&
+ (-\ell_1\ell_2(d_1 + \ell_1)\sin(x_3 - x_4)\sin(x_4 - x_5) \\
&- d_0(2\ell_1^2 + 2d_1\ell_1 + \ell_2^2 + d_1^2))\cos(x_4 - x_5) \\
&- 2\ell_1(\sin(x_3 - x_4)d_0(d_1 + \ell_1) + \frac12 \ell_1\ell_2\sin(x_4 - x_5))\\
&\times \sin(x_3 - x_5) - \ell_2d_0(d_1 + \ell_1))\cos(x_3 - x_5) + \ell_1\ell_2(d_1 + \ell_1)\\
&\times\sin(x_3 - x_4)\sin(x_3 - x_5)\cos^2(x_4 - x_5) + (2\ell_1d_0(d_1 + \ell_1)\\
&\times \sin(x_3 - x_4)\sin(x_4 - x_5) - \ell_2^3)\cos(x_4 - x_5) - (\ell_1\ell_2(d_1 + \ell_1)\\
&\times \sin(x_3 - x_4) + d_0(2\ell_1^2 + 2d_1\ell_1 + \ell_2^2 + d_1^2)\sin(x_4 - x_5))\\
&\times \sin(x_3 - x_5) - d_0\ell_1\ell_2\sin(x_3 - x_4)\sin(x_4 - x_5)
\bigr] \\
& +\ell_1^2 (4 d_0 (d_1 + \ell_1) \cos(x_4 - x_5)^2 + d_0 \ell_2 \cos(x_4 - x_5) \\
&- \ell_1 \sin(x_3 - x_4) \sin(x_4 - x_5) \ell_2 - 2 d_0 (d_1 + \ell_1)) \\
&\times (d_1 + \ell_1) \cos^2(x_3 - x_5) + \ell_1 \cos(x_3 - x_5) \\
&\times \bigl[
\ell_1 \ell_2 (d_1 + \ell_1)^2 \cos^2(x_4 - x_5) + \ell_1 ((\ell_1  \ell_2\sin(x_3 - x_4) \\
& + 4 d_0 (d_1 + \ell_1) \sin(x_4 - x_5) ) \sin(x_3 - x_5) + \ell_2^2) (d_1 + \ell_1) \\
&\times \cos(x_4 - x_5) + \ell_1 \ell_2 d_0 (d_1 + \ell_1) \sin(x_4 - x_5)  \sin(x_3 - x_5) \\
& - d_0(d_1 + \ell_1)(4\ell_1^2 + 2 d_1 \ell_1 + \ell_2^2 + d_1^2)  \sin(x_4 - x_5)  \sin(x_3 - x_4)\\
& - \ell_1 \ell_2 (2\ell_1^2 + 2d_1 \ell_1 + d_1^2)\bigr]-2 \ell_1^2 d_0 (d_1 + \ell_1)^2 \cos^2(x_4 - x_5) \\
&+ (\ell_1 (d_0 (4\ell_1^2 + 2 d_1 \ell_1 + \ell_2^2 + d_1^2) \sin(x_3 - x_4) \\
& + \ell_1 \sin(x_4 - x_5) \ell_2 (d_1 + \ell_1)) \sin(x_3 - x_5) -\ell_1^2 \ell_2 d_0\\
& + \ell_2^3 d_0) (d_1 + \ell_1) \cos(x_4 - x_5)-\ell_1 \bigl[
-(\ell_1 \ell_2 \sin(x_4 - x_5) \\
& + d_0 (d_1 + \ell_1) \sin(x_3 - x_4) ) \ell_2 (d_1 + \ell_1) \sin(x_3 - x_5) \\
& + \ell_1 (\ell_1 \ell_2 (d_1 + \ell_1) \sin(x_3 - x_4)  \sin(x_4 - x_5) \\
& + d_0 (2\ell_1^2 + 2d_1 \ell_1 + d_1^2)) \bigr]
\Bigl\}.
\end{aligned}
$$}

\bibliographystyle{ieeetr}
\bibliography{biblio_nonh}
\end{document}